\newtheorem{theorem}{Theorem}
\newtheorem{lemma}{Lemma}
\newtheorem{corollary}{Corollary}
\providecommand{\Z}{{\mathbb Z}}
\providecommand{\N}{{\mathbb N}}
\providecommand{\G}{{\mathscr G}}
\providecommand{\R}{{\mathscr R}}
\providecommand{\E}{{\mathcal E}}
\begin{document}

\title{Sums of dilates in ordered groups}

\author{Alain Plagne}
\address{Centre de math\'ematiques Laurent Schwartz, \'Ecole polytechnique \\
91128 Palaiseau cedex, France}
\email{plagne@math.polytechnique.fr}
\urladdr{http://www.math.polytechnique.fr/~plagne/}

\author{Salvatore Tringali}
\address{Science Program, Texas A\&M University at Qatar, Education City \\ PO Box 23874 Doha, Qatar}
\email{salvo.tringali@gmail.com}
\urladdr{http://www.math.polytechnique.fr/~tringali/}

\subjclass[2010]{Primary 11P70; Secondary 11B25, 11B30, 11B75, 20F60}
\keywords{Additive combinatorics, sum of dilates, Minkowski sumsets, linearly orderable group}

\thanks{This research was supported by the French ANR Project ``CAESAR'' No. ANR-12-BS01-0011.}

\begin{abstract}
We address the ``sums of dilates problem'' by looking for non-trivial lower bounds on sumsets of the form $k \cdot X + l \cdot X$, where $k$ and $l$ are non-zero integers and $X$ is a subset of a possibly non-abelian group $G$ (written additively). In particular, we investigate the extension of some results so far known only for the integers to the context of torsion-free or linearly orderable groups, either abelian or not.
\end{abstract}

\pagenumbering{arabic}

\maketitle

\section{Introduction}
\label{sec:intro_a041}

One of the main problems in additive combinatorics is to derive non-trivial bounds on the cardinality of  \textit{Minkowski sumsets} (usually called just sumsets), see \cite[Chapter 1]{Na} for the basic terminology and notation.
In this respect, the inequality:
\begin{equation}
\label{equ:basic}
|X+Y| \ge |X|+|Y|-1,
\end{equation}
where $X$ and $Y$ are non-empty subsets of the additive group of the integers, serves as a cornerstone and suggests at least two possible directions of research:
\begin{enumerate}[label={\rm (\roman{*})}]
\item extending \eqref{equ:basic} to broader contexts, e.g. more general groups or possibly non-cancellative semigroups (abelian or not), see \cite{TrMTK} and references therein;
\item improving \eqref{equ:basic} for special pairs of sets $(X,Y)$.
\end{enumerate}
Having this in mind, let $G$ be a fixed group, which needs not be abelian, but we write additively (unless a statement to the contrary is made). For $X$ a subset of $G$
and $k$ an integer, we define
$$
k \cdot X = \{ kx: x \in X\},
$$
and we call $k\cdot X$ a \textit{dilate} of $X$ (we use $X^{\cdot k}$ in place of $k \cdot X$ when $G$ is written multiplicatively).

The present paper is focused on some aspects of the following problem, which we refer to as the \textit{sums of dilates problem}:
Given integers $k$ and $l$, find the minimal possible cardinality of a sumset of the form $k\cdot X + l \cdot X$ in terms of the cardinality of $X$ or,
equivalently, find a best possible lower bound for $| k\cdot X + l \cdot X |$.
As far as we are aware, the study of this problem started about a decade ago, and until recent years it was mainly concerned with the case of $(\mathbb{Z}, +)$.

Specifically, early contributions to this subject date back at least to 2002, when Hamidoune and the first-named author obtained, as a by-product of an extension
of Freiman's $3k-3$ theorem, the following elementary estimate, valid for any non-empty set
$X \subseteq \mathbb{Z}$ and any integer $k$ with $|k| \ge 2$, see \cite{Hami00} and references therein:
\begin{equation}
\label{th:hamidoune_and_plagne_estimate_a041}
|X+k\cdot X| \ge 3|X|-2.
\end{equation}
This result was refined by Nathanson \cite{N}, who showed that
\begin{equation}
\label{eq:nathanson}
|k\cdot X + l\cdot X| \geq   \frac{7}{2}|X| -3
\end{equation}
for any pair of positive coprime integers $(k, l)$, at least one of which is $\ge 3$. Notice here that there is no loss of generality in assuming that $k$ and $l$ are coprime, since replacing $X$ with a set of the form $a \cdot X+b$,
where $a$ and $b$ are integers and $a \neq 0$, does not change the cardinality of the left- and right-hand sides of \eqref{eq:nathanson}.

The case $(k,l) = (1,3)$ was then completely settled in \cite{CSV} by Cilleruelo, Silva and Vinuesa, who established that
\begin{equation}
\label{equ:1+3_case_sharp}
|X+3\cdot X| \geq 4|X|-4,
\end{equation}
and determined the cases in which \eqref{equ:1+3_case_sharp} is an equality. In the same paper, the authors prove the inverse theorem that
$|X+2\cdot X| = 3|X| - 2$ holds if and only if $X$ is an arithmetic progression.

On another hand, the general problem of estimating the size of a sum of any finite number of dilates was considered in 2007 by Bukh \cite{BB}, who showed that
\begin{equation}
\label{equ:bukh_a041}
|k_1\cdot X + \cdots + k_n\cdot X| \geq (|k_1| + \cdots + |k_n|)|X| -  o(|X|)
\end{equation}
for any non-empty set $X \subseteq {\mathbb Z}$  and coprime integers $k_1, \ldots, k_n$. It has been since then conjectured that the bound \eqref{equ:bukh_a041} can be actually improved to the effect of replacing the term $o(|X|)$
with an absolute constant depending only on $k_1, \ldots, k_n$.

Together with applications of Bukh's results to the study of sum-product phenomena in finite fields \cite{Gara07}, the above  has motivated an increasing interest in the topic and led to a number of publications.
In particular, Cilleruelo, Hamidoune and Serra proved in \cite{CHS} that if $k$ is a (positive) prime and $|X| \ge 3(k-1)^3(k-2)!$ then
\begin{equation}
\label{csv_conjecture_a041}
|X+k\cdot X| \geq (1+k) |X| - \left\lceil \frac{k(k+2)}{4} \right\rceil,
\end{equation}
and they also determined all cases where \eqref{csv_conjecture_a041} holds as an equality. This in turn supports the conjecture, first suggested in \cite{CSV},
that \eqref{csv_conjecture_a041} is true for any positive integer $k$, provided that $|X|$ is sufficiently large. Likewise, Hamidoune and Ru\'e showed in \cite{HR} that
$|2\cdot X+k\cdot X| \ge (2+k)|X| - 4k^{k-1}$
if $k$ is again a prime, and in fact
$$
|2\cdot X + k\cdot X| \ge (2+k)|X| - k^2 - k + 2
$$
if $|X| > 8k^k$.

In cases where the previous bounds do not apply, weaker but nontrivial estimates can however be obtained.
For instance, Freiman, Herzog, Longobardi, Maj
and Stanchescu \cite{Freim13} have recently improved \eqref{equ:1+3_case_sharp} by establishing that
$$
|X+k\cdot X| \ge 4|X| - 4
$$
for any $k \ge 3$.
In the same paper, the authors prove various direct and inverse theorems on sums of dilates in the integers
which they use to obtain direct and inverse theorems for sumsets in certain submonoids of Baumslag-Solitar groups \cite{BS},
that is two-generator one-relator groups defined by a presentation of the form
$$
\langle a, b \mid b a^m b^{-1} = a^n \rangle,
$$
where $m$ and $n$ are non-zero integers (throughout, as is usual, these and other finitely presented groups are written multiplicatively).

Finally, even more recently, the case of $(\mathbb{Z}, +)$ was almost completely settled by Balog and Shakan in \cite{Balog}, where it is proved that
$$
| k \cdot X + l \cdot X| \geq (k+l)|X|  - (kl)^{(k+l-3)(k+l)+1}
$$
whenever $k$ and $l$ are coprime positive integers. Another preprint by Shakan \cite{Shak14} gives, in particular, an extension of
this result (where the `error term' on the right-hand side is a constant) to the case studied by Bukh of a general sum of dilates of the form $k_1 \cdot X + \cdots + k_n \cdot X$
for which the coefficients $k_i$ are positive integers and for $2 \le j \le n$ there exists an index $1 \le i < j$ such that $\gcd(k_i, k_j) = 1$.

On another hand, as appears from the above historical overview (which is almost exhaustive), very little has been done so far with regard to the problem of providing non-trivial estimates
for sums of dilates in groups different from $(\mathbb{Z}, +)$,
especially if non-abelian. A couple of exceptions are two recent papers focused on the case of cyclic groups of prime order, namely \cite{P08} and \cite{Ponti13},
some results by Konyagin and \L{}aba (see Section 3 of \cite{Kony06}) on sets of small doubling in linear spaces over the real or the rational field, and a 2014 preprint by Balog and Shakan
\cite{Balog2} on sums of dilates in $(\mathbb Z^n, +)$.

The present article fits in this context and investigates some questions related to the extension of the inequality \eqref{th:hamidoune_and_plagne_estimate_a041} to the setting of torsion-free or linearly orderable groups, either abelian or not.

\section{Going beyond the integers}
\label{sec:beyond_the_integers}

We say that an (additively written) group $G$ is \textit{linearly orderable}
if there exists a total order $\preceq$ on $G$ such that
$x+y \prec x +z$ and $y +x \prec z+x$ for all $x,y,z \in G$ with $y \prec z$, in which case we refer to $\preceq$ as a linear order on $G$ and to the pair $(G, \preceq)$ as a linearly ordered group.

Linearly orderable groups form a natural class of groups for the type of problems studied in this article. E.g., they were considered by Freiman, Herzog, Longobardi and Maj in \cite{Freim12} in reference to an extension of Freiman's $3k-3$ theorem, and some of their results were subsequently generalized
by the second-named author to the case of linearly orderable semigroups \cite{Tring13}.

In continuation to \cite{Freim12}, Freiman, Herzog, Longobardi, Maj, Stanchescu and the first-named author have recently obtained, see \cite{Freim15}, several new results on the structure of the subgroup generated by a small doubling subset of an ordered group (abelian or not), together with various generalizations of Freiman's $3k-3$ and $3k-2$ theorems.

The class of linearly orderable groups is closed under embeddings and direct products,
and it notably includes torsion-free nilpotent groups
(and so, in particular, abelian torsion-free groups), as established by Iwasawa \cite{Iwasa48}, Malcev \cite{Malc48}
and Neumann \cite{Neum49}; pure braid groups \cite{RZ98}; and free groups \cite{Iwasa48}.

In the present context, linearly orderable groups serve as a basic generalization of the additive group of the integers, in that \eqref{equ:basic} and its proof (which is essentially based on the order structure of $\Z$)
can be immediately transposed. Indeed, if $G$ is a linearly orderable group and $X$ and $Y$ are non-empty subsets of $G$, then
\begin{equation}
\label{prop:folklore_linear_a041}
|X + Y | \ge |X| +  |Y| - 1.
\end{equation}
In fact, we know even more since Kemperman proved \cite{Kemp} that \eqref{prop:folklore_linear_a041} remains true if
the ambient group is only supposed to be torsion-free.
In particular, if $k$ and $l$ are non-zero integers and $X$ is a non-empty subset of a torsion-free (and in particular linearly orderable)
group, then
\begin{equation}
\label{prop:folklore46}
|k\cdot X+l\cdot X| \ge 2|X| - 1.
\end{equation}

Based on this, we have the following result, which counts as an elementary generalization of  \eqref{th:hamidoune_and_plagne_estimate_a041}
and serves as a starting point for the present work.

\begin{theorem}
\label{th:commutative_case_a041}
Let $G$ be a torsion-free abelian group and $X$ be a non-empty finite subset of $G$.
Let $k$ and $l$ be non-zero integers with distinct absolute values.
Then $|k\cdot X+l\cdot X| \ge 3|X|-2$.
\end{theorem}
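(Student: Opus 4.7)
The plan is to reduce to the case $G = \mathbb{Z}^r$ and then, after slicing $X$ modulo the larger coefficient, argue by descent on a one-dimensional diameter. Since $X$ is finite, I may replace $G$ by $\langle X \rangle$, a finitely generated torsion-free abelian group, hence isomorphic to $\mathbb{Z}^r$ for some $r \geq 0$. Setting $g := \gcd(k,l)$, the identity $k \cdot X + l \cdot X = g \cdot ((k/g) \cdot X + (l/g) \cdot X)$ combined with the injectivity of multiplication by $g$ on $\mathbb{Z}^r$ reduces the problem to the case $\gcd(k,l) = 1$. Swapping $k$ and $l$ and, if needed, replacing $(k,l)$ by $(-k,-l)$ (which replaces the sumset by its opposite), I may further assume $l \geq 2$ and $|k| < l$. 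The case $|X| = 1$ being trivial, I set $n := |X| \geq 2$.

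The main step is to slice $X$ modulo $l$. Because $\gcd(k,l) = 1$, multiplication by $k$ is a bijection of $\mathbb{Z}^r / l\mathbb{Z}^r$. Partitioning $X = X_1 \sqcup \cdots \sqcup X_d$ according to residue class modulo $l$, the sets $k \cdot X_i$ lie in pairwise distinct residue classes, and hence so do the translates $k \cdot X_i + l \cdot X$, which are therefore pairwise disjoint. Applying the torsion-free inequality \eqref{prop:folklore_linear_a041} to each piece yields
$$
|k \cdot X + l \cdot X| = \sum_{i=1}^{d} |k \cdot X_i + l \cdot X| \;\geq\; \sum_{i=1}^{d}(|X_i| + n - 1) = n + d(n-1),
$$
which is at least $3n - 2$ as soon as $d \geq 2$.

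It remains to handle $d = 1$. In this case $X = a + l \cdot Y$ for some $a \in X$ and $Y \subseteq \mathbb{Z}^r$ with $|Y| = n$, and a short computation gives $k \cdot X + l \cdot X = (k+l)a + l \cdot (k \cdot Y + l \cdot Y)$; thus $|k \cdot X + l \cdot X| = |k \cdot Y + l \cdot Y|$ and I recurse on $Y$. The delicate point, and the main obstacle, is to guarantee that this descent terminates. To this end I fix a $\mathbb{Z}$-linear form $\varphi \colon \mathbb{Z}^r \to \mathbb{Z}$ injective on $X$ (such a form exists because $X$ is finite, and the bad hyperplanes form a finite union) and induct on the positive integer $D := \max \varphi(X) - \min \varphi(X)$. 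Then $\varphi$ is also injective on $Y$, so $\varphi(Y)$ has $n \geq 2$ elements and hence positive integer diameter; reading off $\varphi(X) = \varphi(a) + l\, \varphi(Y)$ one finds that this diameter equals $D/l < D$, so the induction hypothesis applies to $Y$ and closes the argument.
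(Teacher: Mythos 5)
Your proof is correct, and its engine is the same as the paper's: reduce to $\gcd(k,l)=1$ and $G=\mathbb{Z}^r$, split $k\cdot X+l\cdot X$ into pieces lying in pairwise distinct cosets modulo (a subgroup of) $l\mathbb{Z}^r$, apply the torsion-free inequality \eqref{prop:folklore_linear_a041} to each piece, and note that two or more pieces already yield $3|X|-2$. Where you genuinely diverge is the degenerate case of a single piece. The paper forecloses it at the outset: after passing to $\langle X\rangle\cong\mathbb{Z}^n$ it normalizes $0\in X$ and $\gcd X=1$, so that if $k\cdot X$ met only one coset of $H=\langle l\cdot X\rangle\subseteq(l\mathbb{Z})^n$ one would get $X\subseteq(l\mathbb{Z})^n$, hence $l\mid\gcd X=1$, a contradiction with $|l|\geq 2$. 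You instead run a descent: write $X=a+l\cdot Y$, observe that $|k\cdot X+l\cdot X|=|k\cdot Y+l\cdot Y|$, and terminate by inducting on the diameter of $\varphi(X)$ for a linear form $\varphi$ injective on $X$. Both finishes are sound; the paper's gcd normalization dispatches the degenerate case in one line, while your descent avoids any normalization of $X$ at the cost of the (correct, but slightly more delicate) existence of the injective form and the well-foundedness check $D/l<D$. A minor structural difference is that you slice modulo $l\mathbb{Z}^r$ while the paper slices modulo the possibly smaller subgroup $\langle l\cdot X\rangle$; this only makes the paper's piece count at least as large as yours and does not affect the conclusion.
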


\begin{proof}
Without loss of generality we assume that $|l| > |k|\geq 1$, in particular $|l| \geq 2$. In the case when $|X|=1$, the result is immediate, so let $|X| \geq 2$ and define
$$
k^\prime = \frac{k}{\gcd(k,l)}\qquad \text{and}\qquad l^\prime = \frac{l}{\gcd(k,l)}.
$$
Clearly, $k^\prime$ and $l^\prime$ are non-zero integers with distinct absolute values
and, $G$ being abelian, we have
$$
|k \cdot X + l \cdot X| = |k^\prime \cdot (X - g) + l^\prime \cdot (X - g)|
$$
for every $g \in G$. Therefore, we can assume without loss of generality
that $0$ belongs to $X$, and $k$ and $l$ are coprime.

Furthermore, since $X$ is finite and $G$ is abelian and torsion-free, the subgroup of $G$ generated by $X$ is isomorphic to some finite power of $(\Z, +)$.
Hence, we can as well suppose that $X$ generates $G = (\mathbb{Z}^n, +)$ for some integer $n \ge 1$.
We may thus define the greatest common divisor of $X$, which is by definition the greatest common divisor of the coordinates of the elements of $X \subseteq \Z^n$.
If $d ={\rm gcd }\ X$, we obtain $X \subseteq (d \cdot \Z)^n$. This implies $\Z^n = \langle X \rangle \subseteq (d \cdot \Z)^n$ and thus $d=1$.

Let now $H$ be the subgroup of $G$ generated by $l \cdot X$.  One has
$H= \langle l \cdot X \rangle  \subseteq (l \cdot \Z)^n$. We may decompose $k\cdot X$ into its non-empty intersections with cosets modulo $H$.
Let $g_1, \ldots, g_q \in G$ be such that the cosets $g_1 + H, \ldots, g_q + H$ are pairwise disjoint and
the intersection $K_i$ of $k \cdot X$ with $g_i + H$ is non-empty. We obtain a partition of
$k \cdot X$ in the form
$$
k \cdot X = \bigcup_{i=1}^q K_i.
$$
From this, we infer
\begin{equation}
\label{equ:disjoint_sums}
|k \cdot X + l \cdot X| = \sum_{i=1}^q |K_i + l \cdot X|.
\end{equation}
On another hand, since $G$ is supposed to be equal to $(\mathbb{Z}^n, +)$,
it is linearly orderable (e.g., by the natural lexicographic order induced by the usual order of the integers). So, we get from
\eqref{prop:folklore46} and \eqref{equ:disjoint_sums} that
\begin{equation}
\label{blabla}
|k \cdot X + l \cdot X| \ge \sum_{i=1}^q (|K_i| + |X| - 1) = (q+1)|X| - q.
\end{equation}
Suppose that $q=1$. Since  $0 \in X$, we would obtain $k \cdot X \subseteq H \subseteq (l \cdot \Z)^n$. However, since $\gcd(k,l) = 1$, this would imply
that $X$ itself is included in $(l \cdot \Z)^n$ which in turn gives that $l$ divides $\gcd X =1$ and finally $|l|=1$, a contradiction. Thus we must have $q \geq 2$, and
\eqref{blabla} implies the claim.
\end{proof}

It is natural to ask if Theorem \ref{th:commutative_case_a041} continues to hold even without the assumption that the ambient group is abelian, and
our next theorem shows that the answer to this question is negative. Note that from this point on, unless a statement to the contrary is made, we
write groups multiplicatively whenever they are non-abelian.

\begin{theorem}
\label{rty}
Let $k$ and $l$ be non-zero integers. There exist a torsion-free group $G$ and a $2$-element subset
$X$ of $G$ such that $|X^{\cdot k} X^{\cdot l}| < 3|X|-2$.
\end{theorem}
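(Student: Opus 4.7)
The plan is to engineer a torsion-free group in which one of the two dilates of a $2$-element subset collapses to a singleton; this immediately forces the product $X^{\cdot k}X^{\cdot l}$ to have at most two elements, which is strictly less than $3|X|-2=4$.

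I would split into two cases according to $\max(|k|,|l|)$. In the main case at least one of these numbers is $\geq 2$, say $|l|\geq 2$ (the alternative $|k|\geq 2$ is treated by the same construction after interchanging the roles of $k$ and $l$). I would take
$$
G := \langle a, b \mid a^l = b^l\rangle,
$$
which may equivalently be viewed as the amalgamated free product $\Z *_{l\Z} \Z$ of two infinite cyclic groups along their common subgroup of index $|l|$, and set $X := \{a, b\}$. By the defining relation, $X^{\cdot l}$ is the singleton $\{a^l\}$, and since right multiplication by $a^l$ is a bijection,
$$
|X^{\cdot k} X^{\cdot l}| = |X^{\cdot k}| \leq 2.
$$
Two things must be verified. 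First, $G$ is torsion-free: this is immediate from Bass--Serre theory (an amalgamated free product of torsion-free groups over a common subgroup is torsion-free), or alternatively from the classical Magnus torsion theorem for one-relator groups, since the relator $a^l b^{-l}$ is cyclically reduced and not a proper power in the free group on $\{a,b\}$. Second, the set $X$ has two distinct elements: passing to the abelianization $G^{\mathrm{ab}}\cong \Z/|l|\Z \oplus \Z$, the image of $ab^{-1}$ has order exactly $|l|\geq 2$, so $a\neq b$ in $G$.

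In the remaining degenerate case $|k|=|l|=1$, I would simply take $G=(\Z,+)$ and $X=\{0,1\}$: a direct inspection of the four sign patterns $(k,l)\in\{\pm 1\}^2$ shows that $k\cdot X + l\cdot X$ has exactly three elements in every case, which is again less than $4$.

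The main obstacle is certifying the torsion-freeness of the one-relator group $G$, but this is a classical consequence of either of the two results cited above; everything else is routine verification of cardinalities.
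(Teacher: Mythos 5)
Your proof is correct and establishes the stated inequality, but by a genuinely different mechanism from the paper's. The paper takes $G = L_{k,l} = \langle a, b \mid a^k b^l = b^k a^l\rangle$ (the free group on $\{a,b\}$ when $k=-l$) and $X=\{a,b\}$; there both dilates $X^{\cdot k}$ and $X^{\cdot l}$ retain two elements and the relation merely identifies $a^kb^l$ with $b^ka^l$ inside the product, so that $|X^{\cdot k}X^{\cdot l}| = 3$, and the work goes into checking that $a^kb^la^{-l}b^{-k}$ is not a proper power in the free group. You instead collapse one dilate entirely: in $\langle a,b\mid a^l=b^l\rangle\cong\Z *_{l\Z}\Z$ the set $X^{\cdot l}$ is a singleton, so $X^{\cdot k}X^{\cdot l}$ is a translate of $X^{\cdot k}$ and has at most two elements; your torsion-freeness argument (amalgam of copies of $\Z$, or the one-relator torsion theorem applied to $a^lb^{-l}$, which is visibly not a proper power), your verification that $a\ne b$ via the abelianization, and your direct treatment of $|k|=|l|=1$ in $\Z$ are all sound. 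Two remarks on the trade-off. First, your example is so strong that it gives $|X^{\cdot k}X^{\cdot l}|\le 2 < 2|X|-1$, which shows that the paper's inequality \eqref{prop:folklore46} is overstated for general torsion-free groups: Kemperman's theorem only yields $|X^{\cdot k}X^{\cdot l}|\ge |X^{\cdot k}|+|X^{\cdot l}|-1$, and $x\mapsto x^l$ need not be injective outside the abelian or linearly orderable setting --- indeed for $l=2$ your group is exactly the Klein bottle group $\langle a,b\mid a^2=b^2\rangle$ that the paper itself cites as torsion-free. Second, and this is what the paper's more laborious construction buys: a collapse $a^l=b^l$ with $a\ne b$ is impossible in a linearly orderable group (where $x\mapsto x^l$ is strictly monotone for $l\ne 0$), so your construction gives no foothold for Theorem \ref{maintheorem}, whereas the relation $a^kb^l=b^ka^l$ underlying $L_{k,l}$ is precisely the one the paper subsequently realizes inside a linearly ordered semidirect product.
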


Later on, we will discuss the extension of this result to a $3$-element larger $X$, but it seems extremely difficult to understand if this can be pushed further.

Nonabelian torsion-free groups are generally believed
to enjoy better isoperimetric inequalities than the abelian ones. There
is, for instance, a conjecture by Freiman stating that the inequality
$|2X| < 3|X|-4$ for a subset in a torsion-free group implies that the subgroup
generated by $X$ is cyclic (and then the structure of $X$ can be recovered by
Freiman's $3k-4$ theorem). To our knowledge, the best general
inequality for torsion-free groups can be found in \cite{BPS}, where the main result implies that, for
any constant $k$ one has $|2X| \ge 2|X|+k$ provided that $|X|$ is large enough
and not contained in a coset of a cyclic subgroup.

This may indicate that
small sets are precisely not the ones where good estimates will be found.
However, an explicit example is given in \cite{BPS} where
$$|X^2| = \frac{10}{3} |X|-\frac{13}{3}, $$
which does not exclude finding arbitrarily large
sets $X$ with $|X^{\cdot k} X^{\cdot l}| < 3|X| -2$ in torsion-free groups.

\begin{proof}[Proof of Theorem \ref{rty}]
If $k = -l$, then the claim is straightforward by taking $G$ equal to the free group on the set $\{a,b\}$ and $X = \{a,b\}$.

We therefore assume for the remainder of the proof that $k \ne -l$.
A natural place where to look for an answer is then the quotient group of the free group on $\{a,b\}$
by the normal subgroup generated by the relation $a^k b^l = b^k a^l$, namely
the two-generator one-relator group, denoted by $L_{k,l}$, with presentation
$$
 \langle a, b \mid a^k b^l = b^k a^l \rangle.
$$
In fact, by the defining relation of $L_{k,l}$ we have that taking $X = \{a,b\}$ yields
$$
|X^{\cdot k} X^{\cdot l}| = | \{ a^{k+l}, a^k b^l , b^{k+l} \} | = 3 = 3|X| - 3.
$$
Moreover, $L_{k,l}$ is torsion-free, which follows from the fact that a one-relator group has torsion if and only if
the relator is a proper power of some other element in the ambient free group, see Proposition 5.18 of \cite{LS77}.

So, we are left to check that the equation
\begin{equation}\label{equ:in_the_free_group}
a^k b^l a^{-l} b^{-k} = x^t
\end{equation}
has no solution for $t$ an integer $\ge 2$ and $x$ an element in the free group constructed on the set $\{a,b\}$.
Suppose the contrary, and let $x = x_1^{u_1} \cdots x_n^{u_n}$, where the $u_i$ are non-zero integers and the $x_i$ belong to $\{a, b\}$, with $x_i^{-1} \ne x_{i+1} \ne x_i$ for each $i = 1, \ldots, n-1$. If $y_1^{v_1} \cdots y_m^{v_m}$ is the reduced form of $x^t$, then $y_1 = x_1$ and $y_m = x_n$.
Indeed, the positive power of a reduced word, say $w$, in a free group is a word which, when reduced to its minimal form, starts and ends with the same letters as $w$.
It follows from \eqref{equ:in_the_free_group} that $x_1 = a$ and $x_n = b$, with the result that $m = nt$. Thus, we get $4 = m \ge 2t$, which is possible only if $n = t = 2$. But this yields
$$
a^k b^l a^{-l} b^{-k} = a^u b^v a^u b^v
$$
for certain non-zero integers $u$ and $v$. This implies both $u = k$ and $u = -l$, which is impossible since $k \ne -l$.
\end{proof}

As already mentioned, any linearly orderable group is torsion-free, but not viceversa, as is shown, for instance,
by the two-generator one-relator group defined by the presentation $\langle a, b \mid a^2 = b^2 \rangle$,
which is actually the fundamental group of the Klein bottle, see Example 1.24 in \cite{Ha}.

The question therefore remains whether or not it is possible to give an extension
of Theorem \ref{th:commutative_case_a041} to the case where the group $G$ is linearly orderable (and not just torsion-free).
Since we cannot decide whether or not the group $L_{k,l}$, used in the proof above, is linearly orderable, Theorem \ref{rty} cannot be immediately extended to linearly orderable groups.

However, the following theorem shows that the answer to this question is negative too, and it represents the main contribution of the paper.

\begin{theorem}
\label{maintheorem}
Let $k$ and $l$ be positive integers. There then exist a linearly orderable group $G$ and a $2$-element set $X \subseteq G$ such that $|k\cdot X+l\cdot X| < 3|X|-2$.
\end{theorem}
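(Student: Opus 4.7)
The plan is to reduce the theorem to an algebraic existence problem and then exhibit an explicit linearly orderable group realizing the required relation. First I would observe that for a $2$-element $X = \{a, b\}$ in a linearly orderable group the sumset $k \cdot X + l \cdot X$ consists of at most the four elements $a^{k+l}, a^k b^l, b^k a^l, b^{k+l}$, and every coincidence among these except $a^k b^l = b^k a^l$ forces $a = b$ (since in a bi-ordered group the map $x \mapsto x^n$ is strictly monotone, hence injective, for every $n \ge 1$). Combined with $|k \cdot X + l \cdot X| \ge 2|X| - 1$ from \eqref{prop:folklore46}, this reduces the theorem to producing a linearly orderable group containing two distinct elements $a, b$ satisfying $a^k b^l = b^k a^l$.

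For the construction I would take, for a real parameter $\phi > 0$, the semidirect product $G_\phi = \mathbb{R} \rtimes_\phi \mathbb{Z}$ in which the generator of $\mathbb{Z}$ acts on $\mathbb{R}$ by multiplication by $\phi$. Because this action preserves the natural order on $\mathbb{R}$, the lexicographic order on $G_\phi$ is bi-invariant, so $G_\phi$ is linearly orderable; this is the feature that bypasses the obstacle, noted just above the theorem, that the universal one-relator group $L_{k, l}$ is not known to admit a bi-invariant order. Taking $a = (0, 1)$ and $b = (1, 1)$ in $G_\phi$, a direct calculation places $a^k b^l$ and $b^k a^l$ in $\mathbb{R} \times \{k + l\}$ with first coordinates $\phi^k(\phi^l - 1)/(\phi - 1)$ and $(\phi^k - 1)/(\phi - 1)$ respectively (with the obvious limit when $\phi = 1$), so the desired identity is equivalent to the single real equation
\[
\phi^{k+l} + 1 = 2\phi^k, \quad\text{i.e.}\quad \phi^l + \phi^{-k} = 2.
\]

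It remains to exhibit such a $\phi$. The function $f(\phi) = \phi^l + \phi^{-k} - 2$ on $(0, \infty)$ satisfies $f(1) = 0$, tends to $+\infty$ at both $0^+$ and $+\infty$, and has derivative $f'(1) = l - k$. When $k = l$ one may simply take $\phi = 1$, so that $G_1 = \mathbb{R} \times \mathbb{Z}$ is abelian but still fulfils the relation; when $k \neq l$ the zero $\phi = 1$ is simple, and the intermediate value theorem yields a second positive root $\phi_0 \neq 1$, and one then works in the non-abelian group $G_{\phi_0}$. The remaining verifications---that $a \neq b$ and that the three sumset elements $a^{k+l}, a^k b^l\ (= b^k a^l), b^{k+l}$ are pairwise distinct---are immediate from inspection of their first coordinates, giving $|k \cdot X + l \cdot X| = 3 < 3|X| - 2$. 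The main obstacle is precisely the choice of the right group: one needs a linearly orderable group in which the relation $a^k b^l = b^k a^l$ holds, and the above semidirect product supplies one by trading the delicate orderability of $L_{k, l}$ for the elementary existence of a positive root of $\phi^l + \phi^{-k} = 2$.
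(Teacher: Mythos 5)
Your proof is correct and follows essentially the same route as the paper's: a semidirect product of two linearly ordered abelian groups in which the infinite cyclic factor acts by multiplication by powers of a positive real $\phi$ chosen as a root of the polynomial relation forcing $a^k b^l = b^k a^l$ --- your equation $\phi^{k+l}+1=2\phi^k$ is exactly the paper's defining equation for $\beta_{k,l}$ under the substitution $\phi=\alpha^{1/k}$, and your root-existence argument mirrors the paper's sign analysis. Your realization $\mathbb{R}\rtimes_\phi\mathbb{Z}$ with generators at height $1$ is a mild simplification of the paper's $(\mathcal{E}[S],+)\rtimes_\varphi(\mathcal{E}[k],+)$ with generators at height $1/k$, and the orderability step is the paper's Lemma \ref{lem:a041_construction_lemma} in this special case.
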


The proof is deferred to the next section. In Section \ref{sec4}, we make a first step in the refinement of these results by passing from a $2$-element to a $3$-element set.
At the moment, our constructions do not allow for sets of cardinality larger than $3$. This leads us, in Section \ref{sec5}, to raise some questions in this direction.

\section{Proof of Theorem \ref{maintheorem}}

We start by recalling the following result by Iwasawa (namely, Lemma 1 in \cite{Iwasa48}),
which gives a practical characterization of orderable groups and will be used below to construct the linearly orderable group (in fact, a semidirect product) required by our proof of Theorem \ref{maintheorem}.

\begin{lemma}[Iwasawa's lemma]
A group $G$
is linearly orderable if and only if there exists a subset $P$ of $G$, referred to as a positive cone of $G$, such that:
\begin{enumerate}[label={\rm (\roman{*})}]
\item\label{item:iwasawa_i} $1 \notin P$,
\item\label{item:iwasawa_ii} for each $x \in G \setminus \{1\}$ either $x \in P$ or $x^{-1} \in P$,
\item\label{item:iwasawa_iii}  for all $x,y \in P$, $xy \in P$, and
\item\label{item:iwasawa_iv} if $x \in P$ then $yxy^{-1} \in P$ for all $y \in G$.
\end{enumerate}
\end{lemma}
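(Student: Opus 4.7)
The plan is to prove both directions of the equivalence by exhibiting explicit translations between a linear order on $G$ and a subset $P \subseteq G$ of ``positive elements''. No sophisticated machinery is needed; everything reduces to careful bookkeeping with left- and right-invariance and trichotomy.

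For the forward direction, assume $(G,\preceq)$ is linearly ordered. I would set
$$
P := \{x \in G : 1 \prec x\}.
$$
Property \ref{item:iwasawa_i} is immediate from antisymmetry. For \ref{item:iwasawa_ii}, given $x \neq 1$, trichotomy forces $1 \prec x$ or $x \prec 1$; in the latter case, left-multiplying by $x^{-1}$ yields $x^{-1} \prec 1 \cdot x^{-1} \cdot 1^{-1}$, or rather, using both invariances, $1 \prec x^{-1}$. For \ref{item:iwasawa_iii}, if $1 \prec x$ and $1 \prec y$, then right-invariance gives $y \prec xy$, and transitivity with $1 \prec y$ yields $1 \prec xy$. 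For \ref{item:iwasawa_iv}, start from $1 \prec x$, left-multiply by $y$ to get $y \prec yx$, then right-multiply by $y^{-1}$ to get $1 \prec yxy^{-1}$.

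For the converse, suppose $P \subseteq G$ satisfies \ref{item:iwasawa_i}--\ref{item:iwasawa_iv}. Define a binary relation $\prec$ on $G$ by
$$
x \prec y \quad \Longleftrightarrow \quad x^{-1}y \in P,
$$
and let $x \preceq y$ mean $x = y$ or $x \prec y$. A quick observation is that \ref{item:iwasawa_i} together with \ref{item:iwasawa_iii} implies the two alternatives in \ref{item:iwasawa_ii} are mutually exclusive (otherwise $1 = x \cdot x^{-1} \in P$). With this, trichotomy of $\preceq$ follows from \ref{item:iwasawa_ii} applied to $x^{-1}y$, irreflexivity of $\prec$ follows from \ref{item:iwasawa_i}, and transitivity follows from \ref{item:iwasawa_iii} via the identity $x^{-1}z = (x^{-1}y)(y^{-1}z)$.

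It remains to verify that $\preceq$ is a bi-invariant total order. Left-invariance is automatic: for any $w \in G$, one has $(wx)^{-1}(wy) = x^{-1}y$, so $x \prec y$ iff $wx \prec wy$. Right-invariance is exactly where condition \ref{item:iwasawa_iv} enters: from $(xw)^{-1}(yw) = w^{-1}(x^{-1}y)w$, the conjugation-closure of $P$ yields $xw \prec yw$ whenever $x \prec y$. Combining these gives the required bi-invariance. The main (and really the only) delicate point is organising the proof so that the conjugacy-stability assumption \ref{item:iwasawa_iv} is invoked precisely for right-invariance and nowhere else; the remaining verifications are routine manipulations with the definition of $P$ and of $\prec$.
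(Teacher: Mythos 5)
Your proof is correct and is the standard argument; note that the paper itself does not prove this lemma at all --- it simply quotes it as Lemma~1 of Iwasawa's 1948 paper --- so there is no in-paper proof to compare against. Both directions of your argument check out (the only blemish is the garbled intermediate inequality ``$x^{-1} \prec 1 \cdot x^{-1} \cdot 1^{-1}$'' in the verification of \ref{item:iwasawa_ii}, where a single application of left-invariance to $x \prec 1$ already gives $1 \prec x^{-1}$), and you correctly isolate conjugation-closure \ref{item:iwasawa_iv} as the condition responsible for right-invariance of the induced order.
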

Given two groups $G$ and $H$ and a
homomorphism $\varphi: H \to {\sf Aut}(G)$, we denote as usual by $G \rtimes_\varphi H$ the semidirect product of $G$ by $H$
under $\varphi$, that is the (multiplicatively written) group with elements in the set $G \times H$ whose operation is defined by
$$
(g_1,h_1)
(g_2, h_2) = (g_1  \varphi_{h_1}(g_2), h_1 h_2)
$$
for all $g_1, g_2 \in G$ and $h_1, h_2 \in H$. We recall that the identity in $G \rtimes_\varphi H$ is the pair $(1,1)$, while
the inverse of an element $(g,h) \in G \rtimes_\varphi H$  is $(\varphi_{h^{-1}}(g^{-1}), h^{-1})$.

With this notation at hand, we have now the following lemma,
which provides a practical method for constructing a number of
linearly orderable groups. This is essentially a generalization of a result by Conrad (see the first few lines
of Section 3 in \cite{Con}), where $H$ is assumed to be a subgroup of ${\sf Aut}(G)$ and $\varphi$ is the restriction to $H$
of the trivial automorphism on ${\sf Aut}( G)$, that is the identity of ${\sf Aut}({\sf Aut}(G))$.

\begin{lemma}
\label{lem:a041_construction_lemma}
Let $(G, \preceq_G)$ and $(H, \preceq_H)$ be linearly ordered groups, and let $\varphi$ be a homomorphism $H \to {\sf Aut}(G)$.
If $\varphi$ is order-preserving, in the sense that $1 \prec \varphi_h(g)$ for all $g \in G$ and $h \in H$ with $1 \prec g$, then $G \rtimes_\varphi H$ is a linearly orderable group.
\end{lemma}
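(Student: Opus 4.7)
The plan is to apply Iwasawa's lemma to $G \rtimes_\varphi H$ by constructing an explicit positive cone out of the positive cones $P_G := \{g \in G : 1 \prec_G g\}$ and $P_H := \{h \in H : 1 \prec_H h\}$ associated with the given orders. Imitating a lexicographic construction with the $H$-component as the most significant coordinate, I would set
\[
P := \{(g,h) \in G \rtimes_\varphi H : h \in P_H\} \cup \{(g,1) : g \in P_G\}
\]
and then verify conditions \ref{item:iwasawa_i}--\ref{item:iwasawa_iv} of Iwasawa's lemma for this $P$.

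Conditions \ref{item:iwasawa_i} and \ref{item:iwasawa_ii} should be essentially immediate: the identity $(1,1)$ is out of $P$, and for any non-identity $(g,h)$ either $h \neq 1$ (in which case exactly one of $h, h^{-1}$ lies in $P_H$) or $h = 1$ and $g \neq 1$ (and then $(g,h)^{-1} = (g^{-1},1)$ by the formula for the inverse in the semidirect product, so exactly one of $(g,1)$ and $(g^{-1},1)$ lies in $P$). For condition \ref{item:iwasawa_iii}, I would distribute over the two ``regions'' of $P$: the second coordinate of a product $(g_1,h_1)(g_2,h_2)$ is $h_1 h_2$, which lands in $P_H$ as soon as at least one of $h_1, h_2$ does, while in the residual case $h_1 = h_2 = 1$ one has $\varphi_1 = \mathrm{id}$ and so falls back on the closure of $P_G$ under multiplication.

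The main obstacle is condition \ref{item:iwasawa_iv}, the invariance of $P$ under conjugation. Here I would first carry out the direct computation in the semidirect product
\[
(g', h')(g, h)(g', h')^{-1} = \bigl(g' \, \varphi_{h'}(g) \, \varphi_{h' h h'^{-1}}(g'^{-1}),\; h' h h'^{-1}\bigr)
\]
and then split into two cases. If $h \in P_H$, the second coordinate $h' h h'^{-1}$ lies in $P_H$ by condition \ref{item:iwasawa_iv} applied to $(H, \preceq_H)$, whence the conjugate belongs to $P$ regardless of the value of the first coordinate. The delicate case is $h = 1$ with $g \in P_G$: substituting and using $\varphi_1 = \mathrm{id}$, the formula collapses to $\bigl(g'\, \varphi_{h'}(g)\, g'^{-1},\; 1\bigr)$, and the task becomes to show that $g' \varphi_{h'}(g) g'^{-1} \in P_G$. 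This is precisely the point at which the order-preserving hypothesis on $\varphi$ is indispensable: it yields $\varphi_{h'}(g) \in P_G$ from $g \in P_G$, after which condition \ref{item:iwasawa_iv} applied to $(G, \preceq_G)$ closes the argument. Once all four Iwasawa conditions are in place, Iwasawa's lemma delivers the linear orderability of $G \rtimes_\varphi H$.
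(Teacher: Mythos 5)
Your proposal is correct and follows essentially the same route as the paper: the same positive cone $P$ (the lexicographic one with the $H$-component dominant), verified against the four conditions of Iwasawa's lemma, with the order-preserving hypothesis invoked at exactly the same spot, namely to show $g'\varphi_{h'}(g)g'^{-1} \in P_G$ in the conjugation case $h=1$. No gaps.
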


\begin{proof}
We define $P$ as
$$
P= \{ (g,h) \in G \times H \text{ with either } 1 \prec h , \text{ or } h = 1,\ 1 \prec g \}.
$$
We shall prove that $P$ is a positive cone of $G \rtimes_\varphi H$ by checking the conditions \ref{item:iwasawa_i}-\ref{item:iwasawa_iv} in the statement of Iwasawa's lemma.

It is clear by construction that $(1, 1) \notin P$, so \ref{item:iwasawa_i} is immediate.

As for \ref{item:iwasawa_ii}, let $(g,h) \in P$ and assume for a contradiction that its inverse $(\varphi_{h^{-1}}(g^{-1}), h^{-1})$ is also in $P$.
By the definition of $P$, $(g,h) \in P$ implies that $h$ must be larger than or equal to $1$. But our assumptions give $1 \preceq h^{-1}$, so we must have $h = 1$. It follows that $1 \prec g$ and $1 \prec \varphi_{h^{-1}}(g^{-1})=\varphi_{1}(g^{-1})=g^{-1}$, a contradiction.

Then we come to \ref{item:iwasawa_iii}. Given $(g_1, h_1),(g_2, h_2) \in P$, we compute the product  $(g_1, h_1)  (g_2, h_2) = (g_1 \varphi_{h_1}(g_2), h_1 h_2)$ and observe that this is still in $P$.
Indeed, both $h_1$ and $h_2$ are larger than or equal to $1$, so does their product. If $1 \prec h_1 h_2$, we are done. Otherwise,
we must have $h_1 =h_2 =1$. In this case, $g_1 \varphi_{h_1}(g_2) = g_1 g_2$, which is larger than $1$ since both $1 \prec g_1$ and $1 \prec g_2$, by the definition of $P$.

Finally, we prove \ref{item:iwasawa_iv}:  if $(g_1,h_1) \in G \times H$ and $(g_2,h_2) \in P$, then
\begin{equation*}
\begin{split}
(g_1,h_1)  (g_2,h_2)  (g_1,h_1)^{-1}
                & = (g_1, h_1)  (g_2 \varphi_{h_2 h_1^{-1}}(g_1^{-1}), h_2h_1^{-1})\\
                & = (g_1 \varphi_{h_1}(g_2) \varphi_{h_1 h_2 h_1^{-1}}(g_1^{-1}), h_1 h_2 h_1^{-1}).
\end{split}
\end{equation*}
If $1 \prec h_2$, then $1 \prec h_1 h_2 h_1^{-1}$ and we are done. Otherwise, $h_2 = 1$ and $1 \prec g_2$, with the result that
$$
(g_1,h_1)  (g_2,h_2)  (g_1,h_1)^{-1} = (g_1 \varphi_{h_1}(g_2) g_1^{-1}, 1) \in P,
$$
where we use that $1 \prec \varphi_{h_1}(g_2 )$ by the fact that $\varphi$ is order-preserving.

Putting all together, we then have that $P$ is a positive cone of $G \rtimes_\varphi H$, and thus the claim follows from Iwasawa's lemma.
\end{proof}

At this point, we need to introduce some more notation. Specifically,
given a subset $S$ of $ \mathbb{R}$,
we write $\mathcal{E}[S]$ for the additive subgroup of the real field
generated by numbers of the form $\alpha^{h}$ such that $\alpha \in S$ and $h \in \mathbb{Z}$. In the case when $S$ consists of a unique element, say $\alpha$, we simply write $\mathcal{E}[\alpha]$ for $\mathcal E[\{ \alpha \} ]$.
If $\alpha$ is a positive integer, we call $\mathcal{E}[\alpha]$ the set of \textit{$\alpha$-adic fractions}, in that the elements of $\mathcal{E}[\alpha]$
are explicitly given by the rational fractions of the form $h/\alpha^t$ for which $h \in \Z$ and $t \in \mathbb{N}$.

\begin{theorem}
\label{th:semidirect_products_are_lo_a041}
Let $k$ be a positive integer and $\alpha$ a positive real number. Consider the set
$$
S = \left\{\alpha^{k^{-i}} \text{ for } i = 0, 1, 2, \ldots\right\}
$$
and let $\varphi$ be the homomorphism $(\mathcal{E}[k],+) \to {\sf Aut}(\mathcal{E}[S], +)$ taking a $k$-adic fraction $u$ to the automorphism
$x \mapsto \alpha^u x$ of $(\mathcal{E}[S],+)$. Finally, let $G$ be the semidirect product
$$
G = (\mathcal{E}[S], +) \rtimes_\varphi (\mathcal{E}[k], +).
$$
The following holds:
\begin{enumerate}[label={\rm (\roman{*})}]
\item \label{item:1_th1_a041} $G$ is a linearly orderable group.
\item \label{item:3_th1_a041} Given an integer $l  \geq k$, there is a real number $\beta_{k,l} > 0$ satisfying the equation
$$
\sum_{i=k+1}^{k+l} x^{i/k} -\sum_{i=1}^{k} x^{i/k} =0.
$$
Then, for $\alpha= \beta_{k,l}$ we have
$$
e_0^k e_1^l = e_1^k e_0^l
$$
where $e_0 = (\alpha^{1/k}, 1/k)$ and $ e_1 = (0,1/k)$
are elements of $G$.
\end{enumerate}
\end{theorem}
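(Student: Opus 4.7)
My plan is to handle the two items in turn. For (i), I would apply Lemma~\ref{lem:a041_construction_lemma} to the linearly ordered groups $(\mathcal{E}[S], +)$ and $(\mathcal{E}[k], +)$, each ordered by the restriction of the standard order on $\mathbb{R}$ (both are torsion-free abelian subgroups of $(\mathbb{R},+)$, so this is legitimate). Three verifications are required. First, that $\varphi_u$ belongs to ${\sf Aut}(\mathcal{E}[S], +)$ for every $u \in \mathcal{E}[k]$: the map $x \mapsto \alpha^u x$ is clearly an additive bijection of $\mathbb{R}$ with inverse $x \mapsto \alpha^{-u} x$, so the real point is that $\alpha^u \cdot \mathcal{E}[S] = \mathcal{E}[S]$. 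I would argue this by observing that the exponents $h k^{-i}$ with $h \in \mathbb{Z}$ and $i \in \mathbb{N}$ exhaust $\mathcal{E}[k]$, so the natural generating set for $\mathcal{E}[S]$ coincides with $\{\alpha^v : v \in \mathcal{E}[k]\}$; multiplying by $\alpha^u$ then merely shifts the exponent from $v$ to $u + v \in \mathcal{E}[k]$. Second, $\varphi_{u_1 + u_2} = \varphi_{u_1} \circ \varphi_{u_2}$ reduces to $\alpha^{u_1 + u_2} = \alpha^{u_1} \alpha^{u_2}$. Third, $\varphi$ is order-preserving because $\alpha^u > 0$ forces $\alpha^u g > 0$ whenever $g > 0$. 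The lemma then yields that $G$ is linearly orderable.

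For the first assertion of (ii), I would define $f(x) = \sum_{i=k+1}^{k+l} x^{i/k} - \sum_{i=1}^k x^{i/k}$ and apply the intermediate value theorem on $[0,1]$: one has $f(1) = l - k \geq 0$, while as $x \to 0^+$ the term $-x^{1/k}$ dominates the others and forces $f(x) < 0$; hence a positive root $\beta_{k,l} \in (0,1]$ exists (with the trivial choice $\beta_{k,k} = 1$ allowed when $l = k$).

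For the identity $e_0^k e_1^l = e_1^k e_0^l$ with $\alpha = \beta_{k,l}$, I would use the semidirect product law $(g_1, h_1)(g_2, h_2) = (g_1 + \alpha^{h_1} g_2, h_1 + h_2)$ to prove by induction on $n$ that $e_0^n = \left(\sum_{i=1}^n \alpha^{i/k},\ n/k\right)$, while clearly $e_1^m = (0, m/k)$. Multiplying out yields
\[
e_0^k e_1^l = \left(\sum_{i=1}^k \alpha^{i/k},\ (k+l)/k\right), \qquad e_1^k e_0^l = \left(\alpha \sum_{i=1}^l \alpha^{i/k},\ (k+l)/k\right) = \left(\sum_{j=k+1}^{k+l} \alpha^{j/k},\ (k+l)/k\right),
\]
and equality of the first coordinates is exactly the defining relation $f(\alpha) = 0$. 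The main conceptual obstacle is recognizing $\mathcal{E}[S]$ as precisely the $\mathbb{Z}$-span of $\{\alpha^v : v \in \mathcal{E}[k]\}$, which is what makes the action $\varphi$ (and hence the semidirect product) well-defined; once that observation is in hand, the remaining steps are a routine order-preservation check plus a one-variable IVT argument.
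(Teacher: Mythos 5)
Your proposal is correct and follows essentially the same route as the paper: verify that $\varphi$ is a well-defined order-preserving homomorphism into ${\sf Aut}(\mathcal{E}[S],+)$, invoke Lemma~\ref{lem:a041_construction_lemma}, establish the existence of the positive root, and then compute $e_0^k e_1^l$ and $e_1^k e_0^l$ by induction. The only cosmetic differences are that you locate the root via the intermediate value theorem on $(0,1]$ where the paper studies the polynomial $P(x)=\sum_{i=k+1}^{k+l}x^i-\sum_{i=1}^k x^i$ near $0$ and at $+\infty$, and that you phrase the well-definedness of $\varphi_u$ via the observation that the generating set of $\mathcal{E}[S]$ is $\{\alpha^v : v\in\mathcal{E}[k]\}$ rather than by the paper's explicit common-denominator computation; both are sound.
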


\begin{proof}
Let us first check that $\varphi_u$ is well-defined as a function $\mathcal{E}[S] \to \mathcal{E}[S]$ for each $u \in \mathcal{E}[k]$.
For, $u$ being a $k$-adic fraction means that there exist $h \in \mathbb{Z}$ and $t \in \mathbb{N}$ such that $u = h/k^t$.
Therefore, using that an element of $\mathcal E[S]$ is a real number of the form $\sum_{i=1}^l g_i \alpha^{h_i/k^{t_i}}$ for certain $g_i, h_i \in \mathbb{Z}$ and $t_i \in \N$, we find that
$$
\varphi_u(x) = \sum_{i=1}^l g_i \alpha^{h/k^{t} + h_i/k^{t_i}}  = \sum_{i=1}^l g_i \alpha^{s_i/k^{r_i}} \in \mathcal{E}[S],
$$
where $r_i = \max(t_i, t)$ and $s_i$ is equal to the integer $k^{r_i-t} h + k^{r_i - t_i} h_i$.
It follows that $\varphi$ is also well-defined as a homomorphism $(\mathcal{E}[k], +) \to {\sf Aut}(\mathcal{E}[S], +)$.
In fact, $\varphi_{u+v} = \varphi_u \circ \varphi_v$ for all $u,v \in \mathcal{E}[k]$, and $\varphi_u$ is bijective, its inverse being $\varphi_{-u}$.
Lastly, for $u \in \mathcal{E}[k]$ and $x,y \in \mathcal{E}[S]$ we have $\varphi_u(x+y) = \varphi_u(x) + \varphi_u(x)$, that is $\varphi_u$ is a homomorphism
of $(\mathcal E[S],+)$.

\ref{item:1_th1_a041} The claim follows at once from Lemma \ref{lem:a041_construction_lemma}, considering that $(\mathcal{E}[S], +)$ can be linearly ordered by the usual order of $\mathbb{R}$ and  $\alpha > 0$ implies that $\varphi_u(x) = \alpha^{u} x > 0$ for each $u \in \mathcal{E}[k]$  and each positive $x \in \mathcal{E}[S]$.

\ref{item:3_th1_a041} Since the polynomial
$$
P(x)=\sum_{i=k+1}^{k+l} x^{i} -\sum_{i=1}^{k} x^{i}
$$
satisfies $P(0)=0$, $P'(0)=-1$ and $P(x) \to + \infty$ as $x \to +\infty$, it must have one or more positive roots. Let $\beta_{k,l}$ be the $k$-th power of one of them, chosen arbitrarily.

A straightforward induction shows that for any positive integer $j$, we have
$$
e_0^j = (\alpha^{1/k} + \cdots + \alpha^{j/k}, j/k) \text{ and }\ e_1^j =(0, j/k),
$$
whence we get, on the one hand,
$$
e_0^k e_1^l = (\alpha^{1/k} + \cdots + \alpha^{k/k}, 1)  (0,l/k) = (\alpha^{1/k} + \cdots + \alpha^{k/k}, 1+l/k),
$$
and on the other hand,
$$
e_1^k e_0^l = (0,1) (\alpha^{1/k} + \cdots + \alpha^{l/k}, l/k) = (\alpha^{1+1/k} + \cdots + \alpha^{1+l/k}, 1+l/k).
$$
Thus, $e_0^k e_1^l = e_1^k e_0^l$ since $\alpha=\beta_{k,l}$ satisfies $P(\alpha)=0$.
\end{proof}

Taking $X = \{e_0, e_1\}$ in Theorem \ref{th:semidirect_products_are_lo_a041}, we obtain a $2$-element set
such that
$$
|X^{\cdot k} X^{\cdot l}| = 3 = 3|X|-3.
$$
Noticing that we can always reverse the roles of $k$ and $l$, we thus have the following:

\begin{corollary}
\label{cor:1_a041}
Let $k$ and $l$ be positive integers.
There exist a linearly orderable group $G$ and a set $X \subseteq G$ of cardinality $2$ such that $|X^{\cdot k} X^{\cdot l}| = 3 = 3|X|-3$.
\end{corollary}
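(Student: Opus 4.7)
The plan is to derive this corollary as a near-immediate consequence of Theorem \ref{th:semidirect_products_are_lo_a041}. First I would handle the mismatch between the corollary, which is symmetric in $(k,l)$, and the theorem, which only produces a witness under the hypothesis $l \ge k$. For this, I would observe that if $(G, \preceq)$ is a linearly ordered group then so is its opposite group $G^{\mathrm{op}}$ under the very same order (the two one-sided translation axioms simply swap), while $X^{\cdot k}X^{\cdot l}$ computed in $G^{\mathrm{op}}$ coincides with $X^{\cdot l}X^{\cdot k}$ computed in $G$. Hence, after possibly swapping the roles of $k$ and $l$ (and passing to the opposite group), I may assume $l \ge k$.

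Under this assumption, I would apply Theorem \ref{th:semidirect_products_are_lo_a041} with $\alpha = \beta_{k,l}$ to obtain a linearly orderable group $G$ and elements $e_0, e_1 \in G$ with $e_0^k e_1^l = e_1^k e_0^l$. Setting $X = \{e_0, e_1\}$, so that $|X|=2$, the product set
\[
X^{\cdot k} X^{\cdot l} = \{\,e_0^{k+l},\; e_0^k e_1^l,\; e_1^k e_0^l,\; e_1^{k+l}\,\}
\]
has its two middle entries identified by the displayed relation, which already gives $|X^{\cdot k} X^{\cdot l}| \le 3$.

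The remaining step is to verify that the three surviving products are pairwise distinct. Here I would invoke the explicit coordinates recorded in the proof of the theorem, namely $e_0^j = (\alpha^{1/k}+\cdots+\alpha^{j/k},\, j/k)$ and $e_1^j = (0, j/k)$, from which all three products are seen to share the second coordinate $(k+l)/k$, while their first coordinates are respectively $\alpha^{1/k}+\cdots+\alpha^{(k+l)/k}$, $\alpha^{1/k}+\cdots+\alpha^{k/k}$, and $0$, which are three distinct real numbers since $\alpha = \beta_{k,l} > 0$. This delivers $|X^{\cdot k} X^{\cdot l}| = 3 = 3|X|-3$ and concludes the argument. I do not foresee any genuine obstacle here: the heavy lifting is already contained in Theorem \ref{th:semidirect_products_are_lo_a041}, and the corollary is just a matter of unpacking that statement and performing the routine distinctness check.
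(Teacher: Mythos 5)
Your argument is correct and follows the paper's own route: the paper likewise obtains the corollary by taking $X=\{e_0,e_1\}$ in Theorem \ref{th:semidirect_products_are_lo_a041} and ``reversing the roles of $k$ and $l$'' to remove the hypothesis $l\ge k$. Your opposite-group justification of that swap and your explicit coordinate check that the three surviving products are pairwise distinct merely flesh out details the paper leaves implicit, so there is nothing substantively different here.
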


This result is enough to conclude our proof of Theorem \ref{maintheorem}.

\section{Extending Theorem \ref{maintheorem}}
\label{sec4}

Based on Theorem \ref{maintheorem}
it is somewhat natural to ask whether it is possible to construct linearly orderable groups where
to find sets $X$ of any possible size such that $|k \cdot X + l \cdot X| < 3|X| - 2$ for some non-zero integers $k$ and $l$.
In our final result, we show that this is actually the case with a $3$-element set.

\begin{theorem}
\label{th:generalization_to_direct_products_a041}
Let $k$ and $r$ be positive integers, and let $\alpha$ be a positive real number.
Consider the set
$$
S =\left\{\alpha^{k^{-i}} \text{ for } i = 0, 1, 2, \ldots\right\}.
$$
Define
$H$ to be the direct product of $r$ copies of $(\mathcal{E}[S], +)$, and let
$$
G = H \rtimes_\varphi (\mathcal{E}[k], +),
$$
where $\varphi$ is the homomorphism $(\mathcal{E}[k], +) \to {\sf Aut}(H)$ sending a $k$-adic fraction $u$ to the automorphism
$(x_1,\ldots, x_r) \mapsto (\alpha^u x_1, \ldots, \alpha^u x_r)$ of $H$. The following holds:
\begin{enumerate}[label={\rm (\roman{*})}]
\item \label{item:th5_i}  $G$ is a linearly orderable group.
\item \label{item:th5_ii} Let $e_i$ denote, for $i = 0, \ldots, r$, the $(r+1)$-tuple of $G$ whose $(r+1)$-th component is $1/k$, whose $i$-th component is $\alpha^{1/k}$ if $i \ne 0$, and all of whose other components are zero. Moreover, let $l$ be an integer $\ge k$ and assume $\alpha=\beta_{k,l}$, where $\beta_{k,l}$ is defined as in Theorem \ref{th:semidirect_products_are_lo_a041}, point \ref{item:3_th1_a041}.
Then,
$$
e_i^k e_j^l = e_j^k e_i^l
$$
for all $ i,j = 0, 1, \ldots, r$, and hence
$$
|X^{\cdot k} X^{\cdot l}| = 6 = 3|X|-3
$$
for $r \ge 2$ and $X = \{e_0, e_1, e_2\}$.
\end{enumerate}
\end{theorem}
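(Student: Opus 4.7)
The plan is to adapt the proof of Theorem \ref{th:semidirect_products_are_lo_a041} to the multi-coordinate setting, reducing everything to coordinatewise computations in $H$. For part \ref{item:th5_i}, I would equip $H$ with the lexicographic order coming from the linear order on each of its $r$ factors $(\mathcal{E}[S], +)$. The action of $\varphi_u$ on $H$ multiplies every coordinate by the positive real $\alpha^u$, which preserves the sign of the leading non-zero coordinate and hence the lex order. The hypothesis of Lemma \ref{lem:a041_construction_lemma} is then satisfied and yields the linear orderability of $G$.

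For part \ref{item:th5_ii}, a straightforward induction on $j$ (identical to the one used in Theorem \ref{th:semidirect_products_are_lo_a041}\ref{item:3_th1_a041}, but restricted to the $i$-th factor of $H$) shows that the $H$-part of $e_i^j$ is the zero of $H$ when $i = 0$, and otherwise has $\sum_{m=1}^{j} \alpha^{m/k}$ in coordinate $i$ and zeros elsewhere, while the $(\mathcal{E}[k], +)$-part is always $j/k$. Expanding $e_i^k e_j^l$ via the semidirect-product multiplication then places the block $A := \sum_{m=1}^{k} \alpha^{m/k}$ into coordinate $i$ (if $i \ne 0$) and the shifted block $B := \sum_{m=k+1}^{k+l} \alpha^{m/k}$ into coordinate $j$ (if $j \ne 0$), with $(\mathcal{E}[k], +)$-part equal to $1 + l/k$. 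Since $\alpha = \beta_{k,l}$ is chosen precisely so that $A = B$, this expression is symmetric in $i$ and $j$, giving $e_i^k e_j^l = e_j^k e_i^l$ for every pair.

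For the cardinality count, with $r \ge 2$ and $X = \{e_0, e_1, e_2\}$, the $9$ products $e_i^k e_j^l$ for $i, j \in \{0, 1, 2\}$ thus collapse to at most $6$ elements: the three diagonal powers $e_i^{k+l}$ and the three cross products $e_i^k e_j^l$ with $i < j$. Since all six share the common $(\mathcal{E}[k],+)$-coordinate $1 + l/k$, distinctness amounts to examining their first two coordinates in $H$. Writing $T$ for the common value $A = B > 0$, these six pairs of coordinates become $(0, 0)$, $(2T, 0)$, $(0, 2T)$ for the diagonals (using $A + B = 2T$) and $(T, 0)$, $(0, T)$, $(T, T)$ for the cross products, all pairwise distinct because $T > 0$. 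The only genuine obstacle in the argument is this final distinctness verification: nothing algebraic forces the six products apart a priori, and it is ultimately the positivity of $\alpha$, and hence of $T$, that does the job.
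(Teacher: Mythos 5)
Your proof is correct. Part \ref{item:th5_i} coincides with the paper's argument (lexicographic order on $H$, order-preservation of $\varphi$, Lemma \ref{lem:a041_construction_lemma}). For part \ref{item:th5_ii} you take a mildly different route: rather than redoing the computation, the paper introduces the projection homomorphisms $\pi_h$ from $G$ onto the rank-one semidirect product $(\mathcal{E}[S],+)\rtimes_\psi(\mathcal{E}[k],+)$, observes that an element of $G$ is determined by its $r$ projections, and deduces $e_i^k e_j^l = e_j^k e_i^l$ from Theorem \ref{th:semidirect_products_are_lo_a041}, point \ref{item:3_th1_a041}, since each $\pi_h(e_i)$ is one of the two elements $(\alpha^{1/k},1/k)$ and $(0,1/k)$ already treated there. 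Your direct coordinatewise expansion of $e_i^k e_j^l$, placing the block $A=\sum_{m=1}^{k}\alpha^{m/k}$ in coordinate $i$ and $B=\sum_{m=k+1}^{k+l}\alpha^{m/k}$ in coordinate $j$ and invoking $A=B$, is the same computation packaged without the reduction step; neither approach buys much over the other. The one place where you add genuine value is the final count: the paper's proof establishes the relations and then simply asserts $|X^{\cdot k}X^{\cdot l}|=6$, leaving implicit the verification that the six surviving products are pairwise distinct, whereas your list of first-two-coordinate pairs $(0,0)$, $(2T,0)$, $(0,2T)$, $(T,0)$, $(0,T)$, $(T,T)$ with $T=A=B>0$ settles exactly that point. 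So your write-up is a correct proof with a useful supplement rather than a divergence.
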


\begin{proof}
We should first check that $\varphi$ is well-defined as a homomorphism from $(\E[k], +)$ to ${\sf Aut}(H)$, but this boils down to the same kind of verification as in the proof of Theorem \ref{th:semidirect_products_are_lo_a041}, so we can move on.

\ref{item:th5_i} The group $H$ can be linearly ordered by the natural lexicographic order on its $r$ components, and accordingly $\varphi$ is order-preserving. Thus $G$ is a linearly orderable group by Lemma \ref{lem:a041_construction_lemma}.

\ref{item:th5_ii} Denote by $\psi$ the homomorphism $(\E[k], +) \to {\sf Aut}(\E[S],+)$ taking an $k$-adic fraction $u$ to the automorphism $x \mapsto \alpha^u x$ of $(\E[S], +)$. Then for each $h = 1, \ldots, r$ let $\pi_h$ be the projection homomorphism $$
G \to (\E[S], +) \times_\psi (\E[k], +): (x_1, \ldots, x_r, u) \to (x_h, u).
$$
This is clearly a surjective homomorphism, and its restriction to the subgroup of $G$ consisting of those $(r+1)$-tuples $(x_1, \ldots, x_r, u)$ such that $x_j \ne 0$ for some $1 \le j \le r$ only if $j = h$ is an isomorphism.  Therefore, we find that two elements $\xi$ and $\zeta$ of $G$ are equal if and only if
$$
 \pi_h(\xi) = \pi_h(\zeta) \text{ for all } h = 1, \ldots, r.
$$
With this in hand, fix $i,j = 0, 1, \ldots, r$. We have to prove that
$e_i^l e_j^k = e_j^l e_i^k$. For, it follows from the above that this is equivalent to
$$
\pi_h(e_i)^l \pi_h(e_j)^k = \pi_h(e_j)^l \pi_h(e_i)^k
$$
for all $h = 1, \ldots, r$ (here we use that $\pi_h$ is a homomorphism), which in turn is immediate by Theorem \ref{th:semidirect_products_are_lo_a041}, since $\pi_h(e_i) = \pi_h(e_j) = (\alpha^{1/k}, 1/k)$ for $i,j \ge 1$ and $\pi_h(e_0) = (0, 1/k)$.
\end{proof}

By considering the set $X=\{ e_0, e_1,\dots, e_{r-1}\}$ in Theorem \ref{th:generalization_to_direct_products_a041}, we obtain the following corollary.

\begin{corollary}
\label{cor:2_a041}
Let $k$, $l$ and $r$ be positive integers. There exist a linearly orderable group $G$ and a set $X \subseteq G$ with $|X|=r$ such that $|X^{\cdot k} X^{\cdot l}| = \binom{r+1}{2}$.
\end{corollary}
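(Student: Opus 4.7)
The plan is to apply Theorem \ref{th:generalization_to_direct_products_a041} directly. Up to reversing the roles of $k$ and $l$ (and passing to the opposite group, which remains linearly orderable under the same order), we may assume $l \ge k$. Invoke Theorem \ref{th:generalization_to_direct_products_a041} with the direct-product parameter large enough that $e_0, e_1, \ldots, e_{r-1}$ are all defined, take $\alpha = \beta_{k,l}$, and set $X = \{e_0, e_1, \ldots, e_{r-1}\}$. From the explicit description of the $e_i$, the set $X$ has exactly $r$ pairwise distinct elements: $e_0$ has all first components zero, while for $i \ge 1$, $e_i$ carries the nonzero entry $\alpha^{1/k}$ in a distinct coordinate.

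By the relation $e_i^k e_j^l = e_j^k e_i^l$ from Theorem \ref{th:generalization_to_direct_products_a041}\ref{item:th5_ii}, the $r^2$ products $e_i^k e_j^l$ with $0 \le i, j \le r-1$ collapse into at most $r + \binom{r}{2} = \binom{r+1}{2}$ equivalence classes: $r$ classes on the diagonal $i = j$, plus one class for each off-diagonal unordered pair $\{i, j\}$. This gives the upper bound $|X^{\cdot k} X^{\cdot l}| \le \binom{r+1}{2}$.

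For the matching lower bound, I would compute each product explicitly in the semidirect-product coordinates. The induction in the proof of Theorem \ref{th:semidirect_products_are_lo_a041}\ref{item:3_th1_a041} generalizes verbatim to give $e_0^j = (0, j/k)$ and $e_i^j = (T_j \hat{e}_i, j/k)$ for $i \ge 1$, where $T_j := \alpha^{1/k} + \cdots + \alpha^{j/k}$ and $\hat{e}_i$ denotes the standard basis vector in position $i$ of $H$. The defining identity for $\beta_{k,l}$ is equivalent to $\alpha T_l = T_k$, and a brief case analysis then shows that the first components of $e_i^k e_j^l$ are, respectively, $0$, $T_k \hat{e}_m$ with $m = \max(i,j)$, $2T_k \hat{e}_i$, or $T_k(\hat{e}_i + \hat{e}_j)$, according as $i = j = 0$, exactly one of $i, j$ is $0$, $i = j \ge 1$, or $1 \le i \ne j$. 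Since $T_k > 0$, these first components are pairwise distinct across the $\binom{r+1}{2}$ equivalence classes, and hence $|X^{\cdot k} X^{\cdot l}| = \binom{r+1}{2}$. There is no genuine obstacle; the only work is this distinctness check, which is routine once the explicit formulas for $e_i^j$ are in hand.
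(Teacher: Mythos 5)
Your proposal is correct and follows the paper's own route: the paper likewise obtains the corollary by taking $X=\{e_0,\dots,e_{r-1}\}$ in Theorem \ref{th:generalization_to_direct_products_a041} (with the roles of $k$ and $l$ reversed when needed, as in Corollary \ref{cor:1_a041}). The only difference is that you explicitly verify the lower bound by computing the first components $0$, $T_k\hat{e}_{\max(i,j)}$, $2T_k\hat{e}_i$, $T_k(\hat{e}_i+\hat{e}_j)$ and checking they are pairwise distinct, a step the paper leaves implicit; your computation is accurate, since the defining equation for $\beta_{k,l}$ is indeed equivalent to $\alpha T_l=T_k$.
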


The special case where $r=3$ is of particular interest.

\begin{corollary}
\label{kljl}
Let $k$ and $l$ be positive integers. There exist a linearly orderable group $G$ and a set $X \subseteq G$ with $|X|=3$ such that $|X^{\cdot k} X^{\cdot l}| = 6 = 3|X|-3$.
\end{corollary}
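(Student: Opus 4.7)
The plan is to deduce Corollary~\ref{kljl} as the special case $r = 3$ of Corollary~\ref{cor:2_a041}, or equivalently as a direct reading of Theorem~\ref{th:generalization_to_direct_products_a041} in the case $r = 2$. I would set $\alpha = \beta_{k,l}$ as in Theorem~\ref{th:semidirect_products_are_lo_a041}\ref{item:3_th1_a041}, form the semidirect product $G = (\E[S])^2 \rtimes_\varphi (\E[k], +)$ supplied by that theorem, and take $X = \{e_0, e_1, e_2\}$. The linear orderability of $G$ and the commutation identities $e_i^k e_j^l = e_j^k e_i^l$ for all $i, j \in \{0, 1, 2\}$ are then handed to us by Theorem~\ref{th:generalization_to_direct_products_a041}\ref{item:th5_i}--\ref{item:th5_ii}, so only the combinatorial count of $|X^{\cdot k} X^{\cdot l}|$ is left.

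Since $\alpha^{1/k} > 0$, the three elements $e_0 = (0, 0, 1/k)$, $e_1 = (\alpha^{1/k}, 0, 1/k)$, and $e_2 = (0, \alpha^{1/k}, 1/k)$ are visibly distinct, giving $|X| = 3$. The nine products $e_i^k e_j^l$ with $i, j \in \{0, 1, 2\}$ collapse, via the commutation relations, into at most six classes: the three diagonal products $e_i^k e_i^l$ and the three off-diagonal products indexed by the unordered pairs $\{i, j\}$ with $i \ne j$. The only remaining substantive step is to check that these six classes represent six genuinely distinct elements of $G$, so as to match the upper bound and not merely bound $|X^{\cdot k} X^{\cdot l}|$ from above.

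This reduces to a short bookkeeping calculation in the semidirect product. Writing $A_n := \alpha^{1/k} + \cdots + \alpha^{n/k}$, I would use the formulas $e_0^n = (0, 0, n/k)$, $e_1^n = (A_n, 0, n/k)$, and $e_2^n = (0, A_n, n/k)$, together with the product rule in $G$ and the defining identity $\alpha A_l = A_k$ of $\alpha = \beta_{k,l}$, to verify that the six products all share the final coordinate $1 + l/k$, while their first two coordinates in $(\E[S])^2$ run through the six values $(0, 0)$, $(A_k, 0)$, $(0, A_k)$, $(A_k, A_k)$, $(2A_k, 0)$, and $(0, 2A_k)$. Since $A_k > 0$, these six pairs are pairwise distinct, yielding $|X^{\cdot k} X^{\cdot l}| = 6 = 3|X| - 3$. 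I anticipate no genuine obstacle: the conceptual work has already been carried out in Theorem~\ref{th:generalization_to_direct_products_a041}, and what is left is elementary arithmetic inside the semidirect-product operation.
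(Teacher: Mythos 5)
Your proposal is correct and follows essentially the same route as the paper, which obtains this corollary by specializing Theorem~\ref{th:generalization_to_direct_products_a041} (equivalently, the case $r=3$ of Corollary~\ref{cor:2_a041}) to the set $X=\{e_0,e_1,e_2\}$. Your explicit verification that the six products are pairwise distinct --- via the identity $\alpha A_l = A_k$ and the six first-coordinate pairs $(0,0)$, $(A_k,0)$, $(0,A_k)$, $(A_k,A_k)$, $(2A_k,0)$, $(0,2A_k)$ --- is accurate and fills in a counting detail that the paper leaves implicit.
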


\section{Some questions for further research}
\label{sec5}

For a group $G$ and non-zero integers $k$,  $l$ and $r \ge 1$, let us denote by
$$
\chi_{G}(k,l,r)
$$
the minimal possible cardinality of a sumset of the form $X^{\cdot k} X^{\cdot l}$ for a set $X \subseteq G$ with $|X|=r$.
Immediate bounds for this function are
$$
r \leq \chi_{G}(k,l,r) \leq r^2.
$$
The lower inequality is in general sharp: it is enough to consider groups containing a cyclic subgroup of order $r$. However, it can be improved in a number of cases
by using, for instance, a Kneser type or Cauchy-Davenport type theorem.

Based on the above, it is then natural to define, for $\G$ a given class of groups,
$$
\chi^{\G}(k,l,r) = \inf_{G\in \G}\chi_{G} (k,l,r).
$$

In this paper, we have especially investigated the case when $\G$ is either the class $\mathsf{TF}$ of torsion-free groups, or the class $\mathsf{LO}$ of linearly orderable groups.
Using $\mathsf{LO} \subsetneq \mathsf{TF}$, \eqref{prop:folklore46} and the fact that $(\mathbb{Z}, +)$ is an element of $\mathsf{LO}$  yields
\begin{equation}
\label{eq:trivial_on_LO_chi}
2r-1 \leq \chi^{\sf TF} (k,l,r) \leq \chi^{\mathsf{LO}}(k,l,r)  \leq \chi_{(\mathbb{Z}, +)}(k,l,r) \leq (|k|+|l|)r-(|k|+|l|-1).
\end{equation}
The last upper bound follows from considering $X=\{0,1,\dots,r-1\}$.

If $|k|=|l|=1$, \eqref{eq:trivial_on_LO_chi} is in fact a series of equalities.
In any other case, we must have $|k|+|l| \geq 3$ and the upper bound we obtain from \eqref{eq:trivial_on_LO_chi} is never better than $3r-2$.
This bound is improved by Theorems \ref{th:semidirect_products_are_lo_a041} and \ref{th:generalization_to_direct_products_a041}
since these results, when combined with equation \eqref{eq:trivial_on_LO_chi}, read as
$$
\chi^{\sf TF} (k,l,2) =\chi^{\sf LO} (k,l,2)=3\quad\text{and}\quad 5 \leq \chi^{\sf TF} (k,l,3) \leq \chi^{\sf LO} (k,l,3)\leq 6
$$
for any positive integers $k$ and $l$.
Corollary \ref{kljl} provides a quadratic bound in $r$ independent from $k$ and $l$, namely $r(r+1)/2$. But unfortunately, this is smaller than the linear upper bound in  \eqref{eq:trivial_on_LO_chi}
only for small values of $r$. In particular, we get a better bound than $3r-2$ only if $r=2$ or $3$.

Although the situation is clear for $r=2$, even the question of which is the exact value of $\chi^{\sf LO} (k,l,3) \in \{5,6\}$ remains open.
It is not difficult to see that the answer is $5$ if and only if there exist a linearly orderable group $G$ and elements $x,y,z \in G$
such that (i) $x \prec y \prec z$, (ii) $x^ky^l = y^kx^l$, (iii) $x^kz^l=z^kx^l=y^{k+l}$, and (iv) $y^k z^l = z^k y^l$,
but this looks challenging to investigate even in the basic case, say, when $k= 1$ and $l = 2$.

A related question  is as follows: Let $L^{(r)}$ be the free group on $r$ variables, say $x_1,\dots, x_r$.
We denote by $L^{(r)}_{k,l}$ the quotient group of $L^{(r)}$
by the normal subgroup generated by the relations $x_i^k x_j^l = x_j^k x_i^l$ as $i$ and $j$ range in the interval $\{1,\dots, r\}$.
When $r=2$, this is the two-generator one-relator
group considered at the end of Section \ref{sec:beyond_the_integers}.
Are the $L^{(r)}_{k,l}$  linearly orderable groups? Are they torsion-free?
If $r=2$, one can see using von Dyck's theorem (namely, Theorem 2.2.1 in \cite{Rob81})
that there exists an epimorphism $\vartheta: L^{(2)}_{k,l} \to G$, the group constructed in Theorem \ref{th:semidirect_products_are_lo_a041},
mapping $x_1$ to $(\alpha, 1)$ and $x_2$ to $(0,1)$.
We ask if $\vartheta$ is actually an isomorphism. If the answer to this question were yes, this would imply that $L^{(2)}_{k,l}$ is linearly orderable.

More generally, let ${\mathscr R}$ be a set of independent relations on the $r$ variables of the free group $L^{(r)}$, each being of the form
$$
x_i^k x_j^l = x_u^k x_v^l
$$
where $i,j,u$ and $v$ belong to $\{1,\dots, r\}$.
We denote by $L^{(r)}_{{\mathscr R}}$ the quotient group of $L^{(r)}$
by the normal subgroup generated by the set of relations $\R$.
How large can $\R$ be if we require that $L^{(r)}_{{\mathscr R}}$ is linearly orderable or torsion-free?
Since each of the above relations makes the cardinality of the sum $X^{\cdot k} X^{\cdot l}$, where $X$ is the set $\{x_1,\dots,x_r\}$, decrease by one, looking for a large $\R$ is tightly related to having good upper bounds on  $\chi^{\sf TF}(k,l,r)$ and  $\chi^{\mathsf{LO}}(k,l,r)$ and could help to understand the behaviour of these
functions.

\section*{Acknowledgements}

The second-named author is grateful to Yves  de Cornulier for a useful observation
which eventually led to the formulation of Lemma \ref{lem:a041_construction_lemma}.

\end{document}